\font\smallit=cmti10
\newcommand{\bburl}[1]{\textcolor{blue}{\url{#1}}}
\newcommand\blfootnote[1]{%
  \begingroup
  \renewcommand\thefootnote{}\footnote{#1}%
  \addtocounter{footnote}{-1}%
  \endgroup
}
\renewcommand\section{\@startsection {section}{1}{\z@}
{-30pt \@plus -1ex \@minus -.2ex}
{2.3ex \@plus.2ex}
{\normalfont\normalsize\bfseries\boldmath}}
\renewcommand\subsection{\@startsection{subsection}{2}{\z@}
{-3.25ex\@plus -1ex \@minus -.2ex}
{1.5ex \@plus .2ex}
{\normalfont\normalsize\bfseries\boldmath}}
\renewcommand{\@seccntformat}[1]{\csname the#1\endcsname. }
\newtheorem{thm}{Theorem}[section]
\newtheorem{cor}[thm]{Corollary}
\newtheorem{lem}[thm]{Lemma}
\newtheorem{exa}[thm]{Example}
\newtheorem{rek}[thm]{Remark}
\newcommand{\Mod}[1]{\ \mathrm{mod}\ #1}
\begin{document}

\begin{center}
\uppercase{\bf Representation of $\mathbf{\frac{1}{2}(F_n-1)(F_{n+1}-1)}$ and $\mathbf{\frac{1}{2}(F_n-1)(F_{n+2}-1)}$}
\vskip 20pt
{\bf H\`ung Vi\d{\^e}t Chu}\\
{\smallit Department of Mathematics, University of Illinois at Urbana-Champaign, Urbana, IL 61820, USA}\\
{\tt chuh19@mail.wlu.edu}\\
\end{center}
\vskip 20pt

\centerline{\bf Abstract}
\blfootnote{

\noindent 2010 {\it Mathematics Subject Classification}: 11B39

\noindent \emph{Keywords: Fibonacci numbers, nonnegative integral solutions, cyclotomic polynomials} }

\noindent Let $a, b\in \mathbb{N}$ be relatively prime. We consider $(a-1)(b-1)/2$, which arises in the study of the $pq$-th cyclotomic polynomial, where $p,q$ are distinct primes. We prove two possible representations of $(a-1)(b-1)/2$ as nonnegative, integral linear combinations of $a$ and $b$. Surprisingly, for each pair $(a,b)$, only one of the two representations exists and the representation is also unique. We then investigate the representations of $(F_n-1)(F_{n+1}-1)/2$ and $(F_n-1)(F_{n+2}-1)/2$, where $F_i$ is the $i^{th}$ Fibonacci number, and observe several nice patterns. 

\pagestyle{myheadings} 
\thispagestyle{empty} 
\baselineskip=12.875pt 
\vskip 30pt
\section{Motivation and main results}
The $n$-th cyclotomic polynomial is defined as 
$$\Phi_n(x) \ =\ \prod_{m=1, (m,n) = 1}^n (x-e^{\frac{2\pi i m}{n}}).$$
Naturally, much work has been done on the values of the coefficients of $\Phi_n(x)$. Numbers of the form $(a-1)(b-1)/2$ with $(a,b) = 1$ arise in the study of the midterm coefficient of the $pq$-th cyclotomic polynomial, where $p, q$ are distinct primes. (Note that the degree of $\Phi_{pq}(x)$ is $\phi(pq) = (p-1)(q-1)$, where $\phi$ is the Euler totient function, so its midterm coefficient has degree $(p-1)(q-1)/2$.) In particular, these polynomials have been fully characterized by the work of Beiter \cite{Be}, Carlitz \cite{Ca} and Lam and Leung \cite{LL}. These authors used different and very clever approaches. 

In computing the midterm coefficient of $\Phi_{pq}(x)$, Beiter sketched a proof that $(p-1)(q-1)/2$ can be uniquely written as $\alpha q + \beta p + \delta$, where $0\le \alpha\le p-1$, $\beta \ge 0$ and $\delta \in \{0,1\}$. In this article, we provide an alternate proof of the result applied to any relatively prime numbers.

\begin{thm}\label{mainTheo}
Let $a, b\in\mathbb{N}$ be relatively prime. Consider two following equations.
\begin{align} \label{m1}xa+yb &\ =\ \frac{(a-1)(b-1)}{2}.\\
\label{m2}xa+yb+1&\ =\ \frac{(a-1)(b-1)}{2} .\end{align}
Exactly one of the two equations has nonnegative integral solution(s) and the solution is unique.  
\end{thm}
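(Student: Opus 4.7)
The approach is to reduce the statement to a classical symmetry in the Frobenius coin problem. Write $N := (a-1)(b-1)/2$ and let $g := ab - a - b$, the Frobenius number of the coprime pair $(a,b)$. The key algebraic observation is
\[
N + (N-1) \ =\ (a-1)(b-1) - 1 \ =\ ab - a - b \ =\ g,
\]
so $N - 1 = g - N$. Equation (\ref{m1}) asks whether $N$ admits a nonnegative representation $xa + yb$, while (\ref{m2}) asks precisely whether the Sylvester-complement $g - N$ does.

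With this translation in hand, I would invoke the classical Sylvester symmetry theorem: for coprime $a,b$ and any integer $n$ with $0 \le n \le g$, exactly one of $n$ and $g - n$ is representable as $xa + yb$ with $x, y \in \mathbb{Z}_{\ge 0}$. Applied to $n = N$, this immediately yields the desired dichotomy between (\ref{m1}) and (\ref{m2}). Degenerate cases, such as $a = 1$ (where $N = 0$ solves (\ref{m1}) trivially and (\ref{m2}) is infeasible), are handled by direct inspection.

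For uniqueness, I would argue elementarily. If $xa + yb = x'a + y'b$ with all four values nonnegative, then $a(x - x') = b(y' - y)$, and $\gcd(a,b) = 1$ forces $b \mid (x - x')$. If the two representations differed, then $\max(x, x') \ge b$, so the common value would be at least $ab$; but $N$ and $N-1$ both lie in $[0, g] \subset [0, ab)$, a contradiction. The only real obstacle is Sylvester's symmetry itself, which can either be cited or reproduced in a few lines from the observation that every $n$ has a unique representation $n = xa + yb$ with $0 \le x \le b - 1$: then $g - n = (b - 1 - x)a + (-1 - y)b$, and whether the $y$-coefficient is $\ge 0$ or $\le -1$ toggles exactly between $n$ and $g - n$.
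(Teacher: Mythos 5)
Your proposal is correct, and it is essentially the paper's own argument in different packaging: your identity $N+(N-1)=ab-a-b$ is the paper's computation $2k-1=a(b-1)-b$, and your few-line derivation of Sylvester's symmetry (pairing the canonical representation $(x,y)$ of $n$, $0\le x\le b-1$, with the representation $(b-1-x,\,-1-y)$ of $g-n$) is precisely the paper's verification that $r_1+r_2=b-1$ and $s_1+s_2=-1$, combined with its Lemma~\ref{sg}. The only substantive differences are that you name and cite the classical symmetry theorem rather than reproving it ad hoc, and you correctly flag the degenerate case $a=1$ (where $N>g$) which the paper's direct computation handles silently; your uniqueness argument is the same divisibility argument as the paper's.
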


\begin{exa}\normalfont
We observe that both representations of $(a-1)(b-1)/2$ can happen. If $a = 3$ and $b=5$, we have $1\cdot 3 + 0\cdot 5 + 1 = (3-1)(5-1)/2$. If $a = 11$ and $b = 31$, we have
$8\cdot 11 + 2\cdot 31 = (11-1)(31-1)/2$. Our theorem is also related to Problem E1637 \cite{Mo}, which states that for $k\ge (a-1)(b-1)$, there exist nonnegative solution(s) to $xa+yb = k$. Our theorem gives examples of $k$ smaller than $(a-1)(b-1)$, which still make $xa + yb = k$ have a unique nonnegative solution. 
\end{exa}

\begin{cor}\label{Beiterim}
Let $p,q$ be distinct primes. Then
$(p-1)(q-1)/2$ can be uniquely written as $\alpha q+\beta p + \delta$ for some $0\le \alpha\le p-1$, $\beta\ge 0$ and $\delta \in \{0,1\}$.
\end{cor}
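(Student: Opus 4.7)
The plan is to apply Theorem~\ref{mainTheo} directly with $a = q$ and $b = p$. Since $p$ and $q$ are distinct primes, $\gcd(q,p) = 1$, so the hypothesis is satisfied. The theorem then produces a unique nonnegative integer pair $(\alpha, \beta)$ satisfying exactly one of the equations
\begin{align*}
\alpha q + \beta p &\ =\ \frac{(p-1)(q-1)}{2}, \\
\alpha q + \beta p + 1 &\ =\ \frac{(p-1)(q-1)}{2}.
\end{align*}
Letting $\delta \in \{0,1\}$ record which of the two equations is the solvable one, I obtain a unique triple $(\alpha, \beta, \delta)$ with $\alpha, \beta \ge 0$, $\delta \in \{0,1\}$, and $\alpha q + \beta p + \delta = (p-1)(q-1)/2$.

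Since Theorem~\ref{mainTheo} already delivers both existence and uniqueness among nonnegative solutions, the only piece still requiring an independent argument is the upper bound $\alpha \le p-1$; I do not expect this to pose any genuine obstacle. Indeed, if $\alpha \ge p$, then
$$\alpha q + \beta p + \delta \ \ge\ pq,$$
whereas $(p-1)(q-1)/2 = (pq - p - q + 1)/2 < pq$ for all primes $p, q \ge 2$, giving a contradiction. Hence $\alpha \le p - 1$, and the uniqueness over the restricted range $0 \le \alpha \le p-1$ is inherited from the uniqueness of the nonnegative solution provided by Theorem~\ref{mainTheo}. This completes the corollary.
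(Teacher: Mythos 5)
Your proof is correct. You specialize Theorem~\ref{mainTheo} to $a=q$, $b=p$, get a unique nonnegative pair for exactly one of the two equations, encode which equation is solvable in $\delta$, and correctly observe that uniqueness of the triple follows because a second triple with the other value of $\delta$ would be a nonnegative solution of the equation that has none. Your size argument for the missing bound is also sound: $\alpha\ge p$ would force $\alpha q+\beta p+\delta\ge pq>(p-1)(q-1)/2$. This is precisely the derivation the paper leaves implicit by labelling the statement a corollary of Theorem~\ref{mainTheo}; the only proof the paper actually writes out is a different and much shorter argument that imports two external results --- Dresden's theorem that the midterm coefficient of $\Phi_{pq}(x)$ is odd, combined with Beiter's theorem linking that coefficient to the representations of $(p-1)(q-1)/2$. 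Your route has the advantage of being self-contained within the paper and of working for arbitrary coprime $a,b$, whereas the paper's written proof trades self-containment for brevity. A small remark: the bound $\alpha\le p-1$ can also be read off from the paper's proof of Theorem~\ref{mainTheo}, where the constructed solutions have first coordinate $r_i$ satisfying $0\le r_i\le b-1$; your independent argument is equally valid and arguably cleaner, since it does not depend on how the theorem was proved.
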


This corollary is what Beiter used in computing the midterm coefficient of $\Phi_{pq}(x)$. We now present another proof, which is shorter with the use of a strong theorem of Dresden that was not available when \cite{Be} first appeared. 
\begin{proof}[Alternate proof of Corollary \ref{Beiterim}]
By \cite[Theorem 1]{Dresden}, the midterm coefficient of $\Phi_{pq}(x)$ is odd. By \cite[Theorem 1]{Be}, $(p-1)(q-1)/2 = \alpha q+ \beta p + \delta$ in exactly one way.  
\end{proof}

Next, given a pair of consecutive Fibonacci numbers $(F_n, F_{n+1})$, we investigate the nonnegative, integral solutions to \begin{align}\label{1}F_n x + F_{n+1}y \ =\ (F_n-1)(F_{n+1}-1)/2\\
\label{2}1+F_n x + F_{n+1}y \ =\ (F_n-1)(F_{n+1}-1)/2\end{align}
Due to Theorem \ref{mainTheo} and $(F_n, F_{n+1}) = 1$\footnote{due to the Cassini's identity: $F_{n-1}F_{n+1}-F_n^2=(-1)^n$.}, we know that exactly one of these equations has a unique nonnegative, integral solution. By convention, we index the Fibonacci sequence as follows $$F_1 = 1,\mbox{ } F_2 = 1,\mbox{ } F_3 = 2,\mbox{ } F_4= 3,\mbox{ } F_5 = 5, \ldots .$$
The following table provides the first several cases.
\begin{center}
\begin{tabular}{ |c|c|c|c|c|c|c| }
\hline
$n$ & $F_n$ & $F_{n+1}$ & Which equation & $x$ & $y$\\
\hline 
$3$ & $2$ & $3$ & \eqref{2} & $0$ & $0$\\
$4$ & $3$ & $5$ & \eqref{2} & $1$ & $0$\\
$5$ & $5$ & $8$ & \eqref{2} & $1$ & $1$\\
\hline 
$6$ & $8$ & $13$ & \eqref{1} & $2$ & $2$\\
$7$ & $13$ & $21$ & \eqref{1} & $6$ & $2$\\
$8$ & $21$ & $34$ & \eqref{1} & $6$ & $6$\\
\hline
$9$ & $34$ & $55$ & \eqref{2} & $10$ & $10$\\
$10$ & $55$ & $89$ & \eqref{2} & $27$ & $10$\\
$11$ & $89$ & $144$ & \eqref{2} & $27$ & $27$\\
\hline
$12$ & $144$ & $233$ & \eqref{1} & $44$ & $44$\\
$13$ & $233$ & $377$ & \eqref{1} & $116$ & $44$\\
$14$ & $377$ & $610$ & \eqref{1} & $116$ & $116$\\
 \hline
\end{tabular}
\end{center}
We observe two patterns here. First, Equation \eqref{1} and Equation \eqref{2} are used alternatively with period $3$. Second, the first and the third row of each period give $x=y$. The two patterns are summarized by the following theorem.

\begin{thm}\label{fi}
For $k\ge 1$, the following formulas are correct. 
\begin{align}
    \label{first}\frac{1}{2}(F_{6k-1}-1)F_{6k} + \frac{1}{2}(F_{6k-1}-1)F_{6k+1} &\ =\ \frac{(F_{6k}-1)(F_{6k+1}-1)}{2}.\\
    \label{second}\frac{1}{2}(F_{6k+1}-1)F_{6k+1} + \frac{1}{2}(F_{6k-1}-1)F_{6k+2} &\ =\ \frac{(F_{6k+1}-1)(F_{6k+2}-1)}{2}.\\
    \frac{1}{2}(F_{6k+1}-1)F_{6k+2} + \frac{1}{2}(F_{6k+1}-1)F_{6k+3} &\ =\ \frac{(F_{6k+2}-1)(F_{6k+3}-1)}{2}.\\
    \label{fourth}1+\frac{1}{2}(F_{6k+2}-1)F_{6k+3} + \frac{1}{2}(F_{6k+2}-1)F_{6k+4} &\ =\ \frac{(F_{6k+3}-1)(F_{6k+4}-1)}{2}.\\
    1+\frac{1}{2}(F_{6k+4}-1)F_{6k+4} + \frac{1}{2}(F_{6k+2}-1)F_{6k+5} &\ =\ \frac{(F_{6k+4}-1)(F_{6k+5}-1)}{2}.\\
    1+\frac{1}{2}(F_{6k+4}-1)F_{6k+5} + \frac{1}{2}(F_{6k+4}-1)F_{6k+6} &\ =\ \frac{(F_{6k+5}-1)(F_{6k+6}-1)}{2}.
\end{align}
\end{thm}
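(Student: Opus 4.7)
The plan is to verify each of the six identities by direct algebraic manipulation and then appeal to Theorem~\ref{mainTheo} for the uniqueness part. First, a sanity check: every coefficient of the form $\tfrac{1}{2}(F_m-1)$ appearing on the left is a nonnegative integer, since $F_m$ is even precisely when $3\mid m$ and the indices $m\in\{6k-1,\,6k+1,\,6k+2,\,6k+4\}$ occurring in the statement are never divisible by $3$. So the left-hand sides really are nonnegative integral combinations of the Fibonacci pair on the right. Once the six identities are established as numerical equalities, Theorem~\ref{mainTheo} together with $\gcd(F_n,F_{n+1})=1$ immediately promotes each representation to the unique solution of \eqref{1} or \eqref{2}.

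The next step is to multiply each identity through by $2$ and split them into two groups. In identities \eqref{first}, \eqref{fourth}, and the third and sixth, the two Fibonacci coefficients on the left are equal, so one can factor out $F_m-1$ and use $F_a+F_{a+1}=F_{a+2}$ to collapse the left side to $(F_m-1)F_{a+2}$ (possibly plus $2$). Expanding $(F_a-1)(F_{a+1}-1)=F_aF_{a+1}-F_{a+2}+1$ on the right and canceling the common $F_{a+2}$, each such identity reduces to the well-known
$$F_{n-1}F_{n+2}-F_nF_{n+1}\ =\ (-1)^n,$$
taken at $n=6k,\,6k+2,\,6k+3,\,6k+5$ respectively. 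The parity of $n$ is exactly what dictates whether the extra ``$+1$'' (becoming ``$+2$'' after doubling) on the left is needed, so the signs match automatically.

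For identity \eqref{second} and the fifth, the two coefficients on the left differ ($\tfrac{1}{2}(F_{6k+1}-1)$ paired with $\tfrac{1}{2}(F_{6k-1}-1)$, respectively $\tfrac{1}{2}(F_{6k+4}-1)$ with $\tfrac{1}{2}(F_{6k+2}-1)$), so the previous factoring trick does not apply. After doubling and expanding, the cross terms can instead be regrouped via $F_{6k+1}-F_{6k-1}=F_{6k}$ (respectively $F_{6k+4}-F_{6k+2}=F_{6k+3}$), and what remains is precisely Cassini's identity $F_{m-1}F_{m+1}-F_m^2=(-1)^m$ at $m=6k+1$ and $m=6k+4$, with the parities again aligning the presence or absence of ``$+1$'' against the sign $(-1)^m$. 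The main (very mild) obstacle is the bookkeeping of six nearly parallel computations across shifting indices; conceptually the entire theorem rests on the single input $F_{n-1}F_{n+1}-F_n^2=(-1)^n$.
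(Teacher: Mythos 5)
Your proposal is correct and follows essentially the same route as the paper: collapse each left-hand side using $F_a+F_{a+1}=F_{a+2}$ and reduce to a $\pm1$ product identity, where your $F_{n-1}F_{n+2}-F_nF_{n+1}=(-1)^n$ is exactly the paper's Identity \eqref{im} reindexed. The only cosmetic difference is that for \eqref{second} and the fifth formula you invoke Cassini's identity directly, whereas the paper rederives the same relation on the fly from \eqref{im} together with $F_{6k}=F_{6k+2}-F_{6k+1}$.
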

\begin{rek}\normalfont
If $n$ is a multiple of $3$, then $F_n$ is even \cite{SE}. Let $k\in\mathbb{N}$. Because $(F_{3k}, F_{3k+1}) = (F_{3k+2}, F_{3k+3}) = 1$, $F_{3k+1}$ and $F_{3k+2}$ are odd. Hence, $F_n$ is even if and only if $n$ is a multiple of $3$. Therefore, if $n\not\equiv 0\Mod 3$, $\frac{1}{2}(F_n-1)$ is a nonnegative integer. 
\end{rek}

Similarly, given $(F_n, F_{n+2})$, we can consider two following equations
\begin{align}\label{10}F_n x + F_{n+2}y \ =\ (F_n-1)(F_{n+2}-1)/2\\
\label{20}1+F_n x + F_{n+2}y \ =\ (F_n-1)(F_{n+2}-1)/2\end{align}
Due to Theorem \ref{mainTheo} and $(F_n, F_{n+2}) = 1$\footnote{due to the identity: $F_n^2 - F_{n-2}F_{n+2}=(-1)^n$ \cite{LL2}.}, we know that exactly one of these equations has a unique nonnegative, integral solution. The following table provides the first several cases.
\begin{center}
\begin{tabular}{ |c|c|c|c|c|c|c| }
\hline
$n$ & $F_n$ & $F_{n+2}$ & Which equation & $x$ & $y$\\
\hline
$1$ & $1$ & $2$ & \eqref{10} & $0$ & $0$\\
$2$ & $1$ & $3$ & \eqref{10} & $0$ & $0$\\
$3$ & $2$ & $5$ & \eqref{10} & $1$ & $0$\\
\hline
$4$ & $3$ & $8$ & \eqref{20} & $2$ & $0$\\
$5$ & $5$ & $13$ & \eqref{20} & $2$ & $1$\\
$6$ & $8$ & $21$ & \eqref{20} & $6$ & $1$\\
\hline
$7$ & $13$ & $34$ & \eqref{10} & $10$ & $2$\\
$8$ & $21$ & $55$ & \eqref{10} & $10$ & $6$\\
$9$ & $34$ & $89$ & \eqref{10} & $27$ & $6$\\
\hline
$10$ & $55$ & $144$ & \eqref{20} & $44$ & $10$\\
$11$ & $89$ & $233$ & \eqref{20} & $44$ & $27$\\
$12$ & $144$ & $377$ & \eqref{20} & $116$ & $27$\\
 \hline
\end{tabular}
\end{center}
Again, Equation \eqref{10} and Equation \eqref{20} appear alternately with period $3$. The following theorem is compatible to Theorem \ref{fi} and the proof is similar, so we omit it. 
\begin{thm}\label{fi2}
For $k\ge 0$, the following formulas are correct. 
\begin{align}
    \frac{F_{6k+2}-1}{2}F_{6k+1} + \frac{F_{6k-1}-1}{2}F_{6k+3} &\ =\ \frac{(F_{6k+1}-1)(F_{6k+3}-1)}{2}\\
    \frac{F_{6k+2}-1}{2}F_{6k+2}+\frac{F_{6k+1}-1}{2}F_{6k+4} &\ =\ \frac{(F_{6k+2}-1)(F_{6k+4}-1)}{2}\\
    \frac{F_{6k+4}-1}{2}F_{6k+3} + \frac{F_{6k+1}-1}{2}F_{6k+5}&\ =\ \frac{(F_{6k+3}-1)(F_{6k+5}-1)}{2}\\
    1+ \frac{F_{6k+5}-1}{2}F_{6k+4} + \frac{F_{6k+2}-1}{2}F_{6k+6} &\ =\ \frac{(F_{6k+4}-1)(F_{6k+6}-1)}{2}\\
    1+ \frac{F_{6k+5}-1}{2}F_{6k+5} + \frac{F_{6k+4}-1}{2}F_{6k+7}&\ =\ \frac{(F_{6k+5}-1)(F_{6k+7}-1)}{2}\\
    1+\frac{F_{6k+1}-1}{2}F_{6k} + \frac{F_{6k-2}-1}{2}F_{6k+2}&\ =\ \frac{(F_{6k}-1)(F_{6k+2}-1)}{2}. 
\end{align}
\end{thm}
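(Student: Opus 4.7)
The plan is to verify each of the six algebraic identities directly; uniqueness of the resulting solution then comes for free from Theorem \ref{mainTheo} once existence is established. Before starting, I would note (as the preceding Remark already does) that $F_j-1$ is even precisely when $j\not\equiv 0\pmod 3$, so each coefficient $\tfrac{F_j-1}{2}$ appearing in the formulas is a nonnegative integer for all relevant $k$; this takes care of the integrality side of the claim, with the convention $F_{-1}=1$ needed for the $k=0$ case of the first formula and with the last formula read for $k\ge 1$.

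For each of the six identities my strategy is the same. I would multiply through by $2$, expand $(F_i-1)(F_j-1)=F_iF_j-F_i-F_j+1$ on the right, and cancel the linear terms $-F_i$ and $-F_j$ against identical contributions on the left. A single application of the Fibonacci recursion $F_{r+1}=F_r+F_{r-1}$ to one of the remaining cross-products on the left then collapses the whole equation to an instance of
\[ F_m^{2}-F_{m-2}F_{m+2}\ =\ (-1)^{m} \]
for an appropriate index $m$. This is exactly the identity already invoked in the excerpt (the one used to conclude $(F_n,F_{n+2})=1$).

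The uniform pattern I expect to see is a parity dichotomy: for the three identities without a ``$+1$'' on the left the residual index $m$ is odd (so $(-1)^m=-1$ supplies the required constant on the right), whereas for the three with a ``$+1$'' on the left $m$ is even, and the additive constant on the left precisely absorbs the $(-1)^m=+1$. As a sample computation, the first formula, after clearing denominators and cancelling, becomes $F_{6k-1}F_{6k+3}-F_{6k+1}^{2}=1$, corresponding to $m=6k+1$; the remaining five identities cycle $m$ through $6k+2,6k+3,6k+4,6k+5,6k+6$ in order, and this parity alternation is exactly what explains the period-$3$ alternation of Equations \eqref{10} and \eqref{20}.

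The main obstacle is bookkeeping rather than any conceptual difficulty: in each case one has to pick the correct cross-product to rewrite with the recursion so that the cancellation actually yields the Cassini-like identity cleanly instead of leaving a more tangled expression. This is presumably why the author describes the proof as ``similar'' to that of Theorem \ref{fi} and chooses to omit it; the value of writing the plan out is to make the uniform parity explanation for the period-$3$ alternation explicit.
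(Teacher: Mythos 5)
Your overall strategy---clear denominators, cancel the linear terms $-F_i-F_j$ against both sides, apply the recursion once to a cross-product, and reduce each line to an instance of $F_m^2-F_{m-2}F_{m+2}=(-1)^m$---is sound, and it is essentially the direct verification the paper has in mind when it declares the omitted proof ``similar'' to that of Theorem \ref{fi} (which runs the same way using $F_nF_{n+3}=F_{n+1}F_{n+2}+(-1)^{n-1}$). Your sample computation for the first formula is correct, and the same procedure does close out the other five. Your remarks on integrality of the coefficients and on the $k=0$ edge cases ($F_{-1}$ in the first formula, $F_{-2}$ in the last) are also well taken.

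However, the ``uniform parity dichotomy'' you announce is false as stated, so do not lean on it. Carrying out the reduction explicitly, the six formulas collapse to $\pm\bigl(F_m^2-F_{m-2}F_{m+2}\bigr)=\pm(-1)^m$ with $m=6k+1,\ 6k+2,\ 6k+3,\ 6k+4,\ 6k+5,\ 6k$ respectively. In particular the second formula (which has no ``$+1$'') yields $F_{6k+2}^2-F_{6k}F_{6k+4}=1$ with $m=6k+2$ even, and the fifth formula (which has a ``$+1$'') yields $F_{6k+3}F_{6k+7}-F_{6k+5}^2=1$ with $m=6k+5$ odd---both contradicting your claimed correspondence; also the last formula's residual index is $6k$, not $6k+6$. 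What actually varies from line to line is not only the parity of $m$ but also which of the two products, $F_m^2$ or $F_{m-2}F_{m+2}$, already sits on the left before the recursion step, and the sign of the constant you must produce flips accordingly. Since your plan is to verify each identity by direct computation anyway, this does not break the proof; but the advertised parity explanation of the period-$3$ alternation needs to be repaired or dropped.
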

%%%%%%%%%%%%%%%%%%%%%%%%%%%%%%%%%%%%%%%%%%%%%%%%%%%%%%%%%%%%%%%%%%%%%%%%%%%%%%%%%%%%%%%%%%%%%%%%%%%%%%%%%%%%%%%%%%%%%%%%%%%%%%%%%%%%%%%%%%%%%%%%%%%%%%%%%%%%%%%%%%%%%%%%%%%%%%%%%%%%%%%%%%%%%%%%%%%%%%%%%%%%%%%%%%%%%%%%%%%%%%%%%%%%%%%%%%%%%%%%%%%%%%%%%%%%%%%%%%%%%%%%%%%%%%%%%%%%%%%%%%%%%%%%%%%%%%%%%%%%%%%%%%%%%%%%%%%%%%%%%%%%%%%%%%%%%%%%%%%%%%%%%%%%%%%%%%%%%%%%%%%%%%%%%%%%%%%%%%%%%
\section{Proofs}
We first prove the following lemma.
\begin{lem}\label{sg}
For any integers $n, x, y, a, b$ with $a, b$ positive and $(a,b) = 1$, we consider the equation $xa + yb = n$. If there is a solution $r, s$ with $r<b$ and $s<0$, then there are no solutions with $x, y$ nonnegative. 
\end{lem}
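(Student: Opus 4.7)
The plan is to use the standard parametrization of integral solutions to the linear Diophantine equation $xa+yb=n$ and then observe that the constraints $r<b$ and $s<0$ force a contradiction. Given one particular integral solution $(r,s)$, since $(a,b)=1$, every integral solution has the form $x=r+tb$, $y=s-ta$ for some $t\in\mathbb{Z}$.

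The key step is to analyze the sign constraints on $t$ coming from requiring $x\ge 0$ and $y\ge 0$ simultaneously. From $y=s-ta\ge 0$ together with $s<0$ and $a>0$, I would conclude $ta\le s<0$, hence $t<0$, so $t\le -1$. Substituting into the expression for $x$, this forces $x=r+tb\le r-b$. But the hypothesis $r<b$ gives $r-b<0$, so $x<0$, contradicting $x\ge 0$.

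This shows that no integer $t$ can make both $x$ and $y$ nonnegative, which proves the lemma. The main (and only) potential obstacle is making sure the two one-sided inequalities on $t$ are incompatible in the given regime; since they pull $t$ to opposite sides of zero (one to $t\le -1$, the other to $t\ge -r/b>-1$), no integer $t$ exists in between, and the argument is complete. I would present this as a short direct proof of roughly five lines, without needing any further machinery.
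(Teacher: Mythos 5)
Your proof is correct and follows the same route as the paper: parametrize all solutions as $(r+tb,\,s-ta)$, note that $y\ge 0$ with $s<0$ forces $t\le -1$, and then $r<b$ forces $x=r+tb\le r-b<0$. The paper's own proof is exactly this argument stated more tersely, so no further comment is needed.
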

\begin{proof}
All solutions are of the form $(x, y) = (r + tb, s - ta)$ for some $t\in\mathbb{Z}$. To get $y\ge 0$, we must have $t<0$, but then $x< 0$. 
\end{proof}
\begin{proof}[Proof of Theorem \ref{mainTheo}]
Let $k = (a-1)(b-1)/2$. 

Let $0\le r_1\le b-1$ be chosen such that $ar_1\equiv k\Mod b$ and $s_1 := (k-ar_1)/b$. 

Let $0\le r_2\le b-1$ be chosen such that $ar_2\equiv (k-1)\Mod b$ and $s_2: = (k-1-ar_2)/b$.

Observe that $$a(r_1+r_2) \ =\ ar_1+ar_2\ \equiv\ 2k-1\ =\ a(b-1)-b\ \equiv \ a(b-1)\Mod b.$$
So, $b\ |\ (r_1+r_2 - (b-1))$ and so, $r_1+r_2 = b-1$. We compute 
\begin{align*}
    s_1 + s_2 \ =\ \frac{k-ar_1+k-ar_2-1}{b}\ =\ \frac{2k - a(b-1) - 1}{b}\ =\ -1.
\end{align*}
Hence, exactly one of $s_1, s_2$ is nonnegative. By definition, $r_1a+s_1b = k$ and $r_2a+s_2b +1 = k$. Therefore, we have shown that either Equation \eqref{m1} or Equation \eqref{m2} has a nonnegative solution $(x,y)$, while the other equation has no nonnegative solutions due to Lemma \ref{sg}.

It remains to prove that Equation \eqref{m1} has at most one nonnegative solution. (Similar claim and proof hold for Equation \eqref{m2}.) Let $(x_1,y_1)$ and $(x_2, y_2)$ be two nonnegative solutions of Equation \eqref{m1}. Clearly, $0\le x_1, x_2\le (b-1)$, so $|x_1-x_2|\le b-1$. Because $x_1a + y_1b = x_2a+y_2b$, $(x_1-x_2)a = -(y_1-y_2)b$. Since $(a,b) = 1$, $b$ divides $x_1-x_2$, which, along with $|x_1-x_2|\le b-1$, implies that $x_1=x_2$. It follows that $y_1=y_2$. Hence, Equation \eqref{m1} has at most one nonnegative solution. 
\end{proof}

\begin{proof}[Proof of Theorem \ref{fi}]
We prove Formulas \eqref{first}, \eqref{second} and \eqref{fourth}. Proofs for the remaining formulas are similar. 
We use Identity $(d7)$ \cite{LL2} repeatedly
\begin{align}\label{im}F_nF_{n+3}\ =\ F_{n+1}F_{n+2}+(-1)^{n-1}.\end{align}

Write
\begin{align*}
    &\frac{1}{2}(F_{6k-1}-1)F_{6k}+\frac{1}{2}(F_{6k-1}-1)F_{6k+1}\\
    \ =\ &\frac{1}{2}(F_{6k-1}-1)(F_{6k}+F_{6k+1})\\
    \ =\ &\frac{1}{2}F_{6k-1}F_{6k+2}-\frac{1}{2}(F_{6k}+F_{6k+1})\\
    \ =\ &\frac{F_{6k}F_{6k+1}+1}{2} - \frac{1}{2}(F_{6k}+F_{6k+1}) \mbox{ due to }\eqref{im}\\
    \ =\ &\frac{(F_{6k}-1)(F_{6k+1}-1)}{2}.
\end{align*}
This proves Formula \eqref{first}.

Next, we have
\begin{align*}
    &\frac{1}{2}(F_{6k+1}-1)F_{6k+1}+\frac{1}{2}(F_{6k-1}-1)F_{6k+2}\\
    \ =\ &\frac{1}{2}F_{6k+1}^2 - \frac{1}{2}F_{6k+1} + \frac{1}{2}F_{6k-1}F_{6k+2}-\frac{1}{2}F_{6k+2}\\
    \ =\ &\frac{1}{2}F_{6k+1}^2 - \frac{1}{2}F_{6k+1}+\frac{1}{2}(F_{6k}F_{6k+1} + 1) - \frac{1}{2}F_{6k+2}\mbox{ due to }\eqref{im}\\
    \ =\ &\frac{1}{2}F_{6k+1}^2 - \frac{1}{2}F_{6k+1}+\frac{1}{2}((F_{6k+2}-F_{6k+1})F_{6k+1}+1) - \frac{1}{2}F_{6k+2}\\
    \ =\ &\frac{1}{2}F_{6k+1}F_{6k+2} - \frac{1}{2}(F_{6k+1}+F_{6k+2}) + \frac{1}{2} \ =\ \frac{(F_{6k+1}-1)(F_{6k+2}-1)}{2}
\end{align*}
This proves Formula \eqref{second}

Finally, we prove Formula \eqref{fourth}. The left side is
\begin{align*}
    &1+\frac{1}{2}(F_{6k+2}-1)F_{6k+3} + \frac{1}{2}(F_{6k+2}-1)F_{6k+4}\\
    \ =\ &1+\frac{1}{2}F_{6k+2}F_{6k+5}-\frac{1}{2}(F_{6k+3}+F_{6k+4})\\
    \ =\ &1+\frac{1}{2}(F_{6k+3}F_{6k+4}-1)-\frac{1}{2}(F_{6k+3}+F_{6k+4})\\
    \ =\ &\frac{(F_{6k+3}-1)(F_{6k+4}-1)}{2},
\end{align*}
which is the right side. 
\end{proof}
%%%%%%%%%%%%%%%%%%%%%%%%%%%%%%%%%%%%%%%%%%%%%%%%%%%%%%%%%%%%%%%%%%%%%%%%%%%%%%%%%%%%%%%%%%%%%%%%%%%%%%%%%%%%%%%%%%%%%%%%%%%%%%%%%%%%%%%%%%%%%%%%%%%%%%%%%%%%%%%%%%%%%%%%%%%%%%%%%%%%%%%%%%%%%%%%%%%%%%%%%%%%%%%%%%%%%%%%%%%%%%%%%%%%%%%%%%%%%%%%%%%%%%%%%%%%%%%%%%%%%%%%%%%%%%%%%%%%%%%%%%%%%%%%%%%%%%%%%%%%%%%%%%%%%%%%%%%

\end{document}